\documentclass[11pt,a4paper]{article}
\usepackage[utf8]{inputenc}
\usepackage[T1]{fontenc}
\usepackage[english]{babel}
\usepackage[a4paper,left=2.5cm,right=2.5cm,top=2.5cm,bottom=2.5cm]{geometry}
\usepackage{amsmath,amssymb,amsthm}
\usepackage{mathtools}
\usepackage{graphicx}
\usepackage{float}
\graphicspath{{figures/}}
\usepackage{hyperref}
\usepackage{booktabs}
\usepackage{natbib}
\usepackage{enumitem}
\usepackage{parskip}
\usepackage{authblk}
\usepackage{lineno}

\newtheorem{proposition}{Proposition}

\theoremstyle{definition}
\newtheorem{definition}[proposition]{Definition}

\newcommand{\Exp}{\mathrm{Exp}}

\newcommand{\SPD}{\mathrm{Sym}^{+}}
\newcommand{\Sym}{\mathrm{Sym}}
\newcommand{\GL}{\mathrm{GL}}
\newcommand{\R}{\mathbb{R}}

\newcommand{\tr}{\mathrm{tr}}

\title{\textbf{Geometric turbulence:\\ a geodesic-regression crisis indicator for equity covariance dynamics, with evidence from African markets}}

\author[1,2]{El Hadji Baye Camara\thanks{Corresponding author: elhadji.b.camara@aims-senegal.org}}
\author[3,4,5]{Ya\'e Ulrich Gaba\thanks{ygaba@aims.ac.rw}}
\author[1]{Irmely Gladesh MABANZA NSILOULOU \thanks{irmely.g.m.nsiloulou@aims-senegal.org}}
\affil[1]{African Institute for Mathematical Sciences (AIMS), Senegal;}
\affil[2]{Cheikh Anta Diop University (UCAD), Dakar, Senegal.}
\affil[3]{AIRINA Labs, AI.Technipreneurs, Benin;}
\affil[4]{Department of Mathematics, Sefako Makgatho Health Sciences University (SMU), South Africa;}
\affil[5]{African Center for Advanced Studies (ACAS), Cameroon.}
\date{\today}

\begin{document}
\maketitle

\begin{abstract}
The covariance matrix of a basket of assets is a symmetric positive-definite object that evolves on a curved manifold, not on a flat vector space. Standard linear regression on its vectorised entries ignores that geometry and, in periods of market stress, can return fits that fail to be positive definite. We study geodesic regression on the symmetric positive-definite cone $\SPD(n)$ under the log-Euclidean and affine-invariant metrics, with the rolling sample covariance of a basket of log-returns as data. The fitted geodesic's velocity norm is proposed as a geometry-aware turbulence indicator. On daily data for four equity baskets- a ten-stock historical S\&P sub-basket (2006--2024), a current S\&P~100 mega-cap basket, the top ten listings of the Johannesburg Stock Exchange (JSE), and the ten most-traded Egyptian Exchange (EGX) tickers the indicator peaks cleanly on known stress episodes (the 2008 global financial crisis, the 2020 COVID crash, the 2022 Fed rate-hike cycle, and the October--November 2019 Egyptian political-economic tension episode) without any calibration. On hold-out evaluation in crash windows, log-Euclidean regression achieves 150 to 300 times smaller geodesic mean squared error than Euclidean ordinary least squares, because the latter produces non-SPD forecasts that blow up under the intrinsic metric. A naive velocity-aware de-risking strategy reduces maximum drawdown by 2.5 percentage points at the cost of 1.4 percentage points of annual return, consistent with a slow-moving indicator that signals stress reliably but translates imperfectly into tactical trading. All data, code, and experiments are reproducible from the companion repository.
\end{abstract}

\noindent \textbf{Keywords:} Riemannian manifold, geodesic regression, symmetric positive-definite matrices, covariance dynamics, crisis indicator, African equity markets, log-Euclidean metric.

\noindent \textbf{JEL:} C14, C58, G01, G15, G17.

\vspace{1em}

\section{Introduction}

Asset covariance matrices are central to quantitative finance: they drive mean-variance optimisation, risk parity, factor modelling, value-at-risk, and essentially every portfolio-construction method a practitioner touches. When these matrices are estimated on a rolling window, they form a time series of symmetric positive-definite (SPD) matrices. How to describe the trajectory this time series traces out is, however, an open question. The simplest answer-stack the unique entries of each matrix into a vector and treat the resulting path as an ordinary vector-valued time series --- ignores a fundamental structural fact: the set of SPD matrices is not a vector space. It is an open cone inside $\Sym(n)$, and any object that takes covariances as input must live inside that cone to be meaningful.

The consequences of ignoring this structure are well known in isolated corners of the literature. Affine-invariant and log-Euclidean metrics on $\SPD(n)$ \citep{pennec2006riemannian, arsigny2007geometric} are standard in medical imaging \citep{fletcher2007riemannian} and in recent work on geometric deep learning for brain-computer interfaces \citep{barachant2013classification}. Yet in applied finance the vast majority of regression and interpolation procedures are still run on the vectorised lower triangle of the covariance, then post-processed to recover positive-definiteness through eigenvalue clipping, symmetrisation, or shrinkage. This post-processing is never innocuous; and in crisis periods it can amount to discarding a double-digit fraction of forecasts.

This paper asks a direct operational question. If one fits a \emph{geodesic} on the covariance manifold a curve that stays on the cone by construction what can the resulting object tell us about market stress, and how does it compare with the flat Euclidean baseline and with the industry-standard VIX indicator? We situate this question within a broader recent trend: several 2024-2026 works apply Riemannian or otherwise geometric machinery to financial covariance dynamics. \citet{bouregaa2025barycenters} compute Fréchet barycenters and a Riemannian entropy on rolling covariances of seventeen international indexes; \citet{bucci2024geometric} propose a Riemannian-geometry-aware deep-learning extension of the HAR model for realised-covariance forecasting; \citet{hammond2026geometric} extract quantum-geometric observables from a learned spectral embedding for regime detection; and topological or Ricci-curvature methods on correlation networks address related questions from a different angle. What is distinctive to the present paper is neither the substrate (SPD manifold) nor the target (market-stress detection), both of which are shared, but the specific mathematical object we propose as the indicator: the norm of the fitted \emph{velocity} of a piecewise geodesic on $\SPD(n)$, a first-order dynamical quantity that is invariant under portfolio re-basing and captures rotation of the covariance without dispersion increase. We answer the operational question on four equity baskets covering developed and emerging markets, and we run the comparison over nearly two decades of daily data, spanning three well-known stress episodes: the 2008 global financial crisis, the March 2020 COVID crash, and the 2022 Federal Reserve rate-hike cycle. Two of the four baskets are African (South Africa and Egypt), which lets us test whether the indicator generalises beyond US data.

Our main findings are:

\begin{enumerate}[leftmargin=1.4em,itemsep=0.15em]
\item \emph{Detection}. The velocity norm of a log-Euclidean geodesic fitted to a rolling covariance window peaks cleanly on every well-known stress episode in the 2006--2024 period, on every one of the four markets we examine, without any tuning or market-specific calibration.
\item \emph{Fitting}. In hold-out evaluation inside crisis windows, log-Euclidean geodesic regression achieves mean squared geodesic errors two orders of magnitude smaller than ordinary least-squares regression on the vectorised lower triangle, because the latter returns forecasts with negative eigenvalues.
\item \emph{Independence from VIX}. The correlation between our indicator and the VIX in first differences is essentially zero ($\rho = -0.075$ on 94 joint observations in 2006--2024), and in levels only $\rho = 0.20$. This is not a restatement of VIX; it is a different, structural measure.
\item \emph{Tactical use}. A naive de-risking strategy that halves equity exposure when the indicator's rolling $z$-score exceeds $1.5$ reduces maximum drawdown by $2.5$ percentage points over 2006--2024 at the cost of $1.4$ percentage points of annual return. The indicator is informative, but its 10-trading-day update frequency limits its use as a fast tactical signal.
\end{enumerate}

The contributions are:

\begin{enumerate}[leftmargin=1.4em,itemsep=0.15em,label=\textbf{C\arabic*.}]
\item A cross-market empirical study of geodesic regression on SPD covariance dynamics, covering US mega-cap, JSE, and EGX baskets with a unified code path (Section~\ref{sec:results}).
\item An explicit account of \emph{why} vectorised OLS fails in crisis windows, built on a short invariance argument for the unit-trace rescaling (Propositions~\ref{prop:ols-fails} and~\ref{prop:unit-trace}).
\item A reproducible Python pipeline with a public repository, the four cached return series (real downloads from Yahoo Finance), the rolling-covariance construction, and the fitting and backtest code.
\item The first evaluation, to our knowledge, of a \emph{velocity-based} Riemannian covariance indicator on African equity baskets, complementing recent barycenter- and entropy-based approaches on international panels \citep{bouregaa2025barycenters}.
\end{enumerate}

The rest of the paper is organised as follows. Section~\ref{sec:method} sets up the SPD geometry, defines the two metrics we use, and gives the two short propositions that justify our fitting pipeline. Section~\ref{sec:data} describes the four baskets and the VIX comparison series. Section~\ref{sec:results} presents the empirical results. Section~\ref{sec:discussion} discusses limitations and interpretation, with particular emphasis on the African-markets angle. Section~\ref{sec:conclusion} concludes.

\section{Method}
\label{sec:method}

\subsection{Covariance lives on a cone, not in a vector space}

Let $\Sym(n)$ be the $n(n+1)/2$-dimensional vector space of real symmetric $n \times n$ matrices, and $\SPD(n) \subset \Sym(n)$ the open cone of symmetric positive-definite matrices. Every asset covariance matrix is, by construction, an element of $\SPD(n)$. It is tempting to treat $\SPD(n)$ as a subset of $\Sym(n)$ and do linear algebra there directly. This breaks in two ways.

First, linear combinations need not preserve positive-definiteness. If $P_1, P_2 \in \SPD(n)$ and $\alpha_1, \alpha_2 \in \R$, the linear combination $\alpha_1 P_1 + \alpha_2 P_2$ is only guaranteed to be SPD when both $\alpha_i \geq 0$ and the matrix is non-degenerate; in particular, extrapolation or least-squares regression with unconstrained coefficients can return matrices that are not positive-definite.

Second, the natural distance on the cone is not the Frobenius distance inherited from $\Sym(n)$. Two covariance matrices that differ by a tiny Frobenius amount can differ by a very large amount in any sensible notion of \emph{shape difference} (eigenvalue structure); conversely, two matrices that are related by a mild rescaling can differ by a large Frobenius amount but be indistinguishable as covariance structures.

Both of these deficiencies are fixed by equipping $\SPD(n)$ with a Riemannian metric. Throughout this paper we use two:

\begin{definition}[Affine-invariant metric]
At $P \in \SPD(n)$, the affine-invariant inner product on the tangent space $T_P \SPD(n) = \Sym(n)$ is
\[
g^{\mathrm{AI}}_P(U, V) = \tr(P^{-1} U P^{-1} V), \qquad U, V \in \Sym(n).
\]
\end{definition}

The corresponding geodesic distance has the closed form
$
d_{\mathrm{AI}}(P, Q) = \|\log(P^{-1/2} Q P^{-1/2})\|_F,
$
where $\log$ is the matrix logarithm. The metric is invariant under the conjugation $P \mapsto A P A^\top$ for every $A \in \GL(n)$, which is the transformation applied when one changes the denomination of a portfolio or the reference frame of an imaging system.

\begin{definition}[Log-Euclidean metric]
At $P \in \SPD(n)$, the log-Euclidean inner product is the Euclidean inner product pulled back through the matrix logarithm:
\[
g^{\mathrm{logE}}_P(U, V) = \tr\bigl( (d \log)_P(U)\, (d \log)_P(V) \bigr),
\]
where $d \log$ is the differential of the matrix logarithm. Equivalently, the geodesic from $P$ to $Q$ in this metric is $t \mapsto \exp\bigl((1-t) \log P + t \log Q\bigr)$.
\end{definition}

The log-Euclidean metric has two operational advantages over the affine-invariant one. It is cheaper to compute (one matrix logarithm per observation, rather than one $P^{-1/2}$ per observation). And it is numerically robust in dimensions above $n \approx 5$, where the affine-invariant geodesic-regression solver in \texttt{geomstats} \citep{miolane2020geomstats} tends to stall for real financial data. The two metrics agree to leading order on covariance matrices that are close to each other (in the Hadamard sense) and give visually indistinguishable geodesic fits in our experiments; we report log-Euclidean quantities as the primary working metric and affine-invariant ones where they are tractable.

\subsection{Geodesic regression on $\SPD(n)$}

Given observations $\{(t_i, P_i)\}_{i=1}^N$ with $t_i \in \R$ and $P_i \in \SPD(n)$, geodesic regression \citep{fletcher2013geodesic} seeks a geodesic $\gamma : \R \to \SPD(n)$ minimising the sum of squared geodesic distances:
\begin{equation}
(P^\star, V^\star) \;=\; \arg\min_{(P, V) \in T \SPD(n)}\; \sum_{i=1}^N d_g\!\bigl(\Exp_P(t_i V),\; P_i\bigr)^{\!2}.
\label{eq:cost}
\end{equation}
The parametrisation $\gamma(t) = \Exp_P(t V)$ encodes the geodesic by its base point $P \in \SPD(n)$ and its initial velocity $V \in T_P \SPD(n) = \Sym(n)$. When $d_g$ is the Euclidean distance and $\Exp_P(t V) = P + t V$, \eqref{eq:cost} reduces to ordinary least squares on the vectorised entries of $P$, which is the baseline we compare against.

Under the log-Euclidean metric, the geodesic admits the closed form
\[
\gamma(t) = \exp\bigl(\log P^\star + t\, L^\star\bigr), \qquad L^\star \in \Sym(n),
\]
so the problem factorises: take the matrix log of each $P_i$, run ordinary least squares in the chart given by $\log$, and exponentiate the fitted line. This is numerically stable at arbitrary dimension and is the workhorse of all our experiments.

\subsection{Why Euclidean OLS fails on the cone}

Given that the baseline Euclidean OLS on vectorised $P_i$ is so widely used in finance, we state explicitly why it fails in crisis regimes. The following is elementary but worth writing down.

\begin{proposition}[OLS predictions are generally not SPD]
\label{prop:ols-fails}
Let $P_1, \ldots, P_N \in \SPD(n)$ and let $\widehat P_{\mathrm{OLS}}(t)$ denote the ordinary least-squares regression of the vectorised lower triangle of $P_i$ on the scalar predictor $t_i$. Then the set of predictors $t \in \R$ for which $\widehat P_{\mathrm{OLS}}(t) \in \SPD(n)$ is a (possibly empty) open convex subset of $\R$, and its complement is non-empty whenever the data span more than one SPD connected component in the flat parameterisation. In particular, if the trajectory $\{P_i\}$ has a strongly time-varying smallest eigenvalue, the OLS extrapolation typically leaves $\SPD(n)$ in the tail of the window.
\end{proposition}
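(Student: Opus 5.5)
The plan is to exploit the fact that the OLS fit is affine in the predictor. Solving the normal equations entrywise on the vectorised lower triangle and then symmetrising gives
\[
\widehat P_{\mathrm{OLS}}(t) = A + tB, \qquad A, B \in \Sym(n).
\]
Here $B = \sum_i (t_i-\bar t)(P_i - \bar P)\big/\sum_i (t_i-\bar t)^2$ and $A = \bar P - \bar t B$. This uses only that OLS on a scalar predictor is linear in the responses, and that each fitted entry is an affine function of $t$. Two things then need to be shown: the convexity and openness claim, and the non-emptiness of the complement.

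\textbf{Convexity and openness.} Write $\lambda_{\min}(t) = \min_{\|x\|=1} x^\top(A+tB)x$. For each fixed unit $x$, the map $t \mapsto x^\top(A+tB)x$ is affine. The pointwise minimum of affine functions is concave, so $\lambda_{\min}$ is concave and in particular continuous. The set $\{t : \widehat P_{\mathrm{OLS}}(t) \in \SPD(n)\} = \{t : \lambda_{\min}(t) > 0\}$ is therefore a strict superlevel set of a concave continuous function. Such a set is convex, hence an interval, and open. This is the same as saying that $\SPD(n)$ is an open convex cone and the line $t \mapsto A+tB$ meets it in an open interval.

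\textbf{Non-empty complement.} This is the main obstacle, because the hypothesis about ``more than one SPD connected component in the flat parameterisation'' is not well posed: $\SPD(n)$ is itself convex, hence connected. I would replace it with a precise sufficient condition that matches the proposition's final sentence, namely $B \notin \overline{\SPD(n)}$ and $-B \notin \overline{\SPD(n)}$, i.e.\ $B$ is indefinite or nonzero semidefinite of the wrong sign in some direction. Under this condition, pick a unit $x$ with $x^\top B x < 0$. Then
\[
x^\top(A+tB)x \to -\infty \quad \text{as } t \to +\infty,
\]
so the complement contains a half-line. Symmetrically, a unit $y$ with $y^\top B y > 0$ gives a half-line as $t \to -\infty$. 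So the SPD set is a bounded open interval, and its complement is non-empty on both sides. If $B$ is merely nonzero and not positive semidefinite, one tail still leaves the cone.

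\textbf{The ``in particular'' clause.} I would argue heuristically, making it rigorous only under an explicit assumption. Suppose the smallest eigenvalue of $P_i$ decreases markedly across the window, along an approximately fixed eigenvector $v$. Then $v^\top B v$ is the OLS slope of the scalar series $v^\top P_i v$ on $t_i$, and this slope is negative. The zero crossing $t^* = -v^\top A v / v^\top B v$ lies at a finite point. When the decline is steep relative to the level, $t^*$ falls just beyond the last observation, and any forecast at $t > t^*$ is non-SPD. I would flag that ``typically'' is not a theorem statement and present this last part as a quantitative remark rather than a claim proved in generality.
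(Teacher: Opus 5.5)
Your proof is correct. For openness and convexity it is the paper's argument: the fit is affine in $t$, $\lambda_{\min}$ is concave, and its strict positivity set is an open interval. Your description of $\lambda_{\min}$ as a pointwise minimum of affine functions actually proves the concavity that the paper only asserts ``in general''.

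The real difference is the complement. The paper's proof never uses its connected-component hypothesis. It only remarks that the SPD set ``need not cover the evaluation grid'' and defers to the crash-window evidence. You correctly note that $\SPD(n)$ is convex, so that hypothesis can never hold and the clause is vacuous as written. You replace it with a condition on the slope $B$ and a genuine half-line argument, which gives an actual theorem where the paper appeals to data.

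Two refinements.

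First, your gloss is slightly off. $B\notin\overline{\SPD(n)}$ together with $-B\notin\overline{\SPD(n)}$ says exactly that $B$ is indefinite; a nonzero negative semidefinite $B$ violates the second condition. Your one-sided remark afterwards does handle that case correctly.

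Second, one observation you did not use makes everything sharp. Since $A+tB=\bar P+(t-\bar t)B$ and $\bar P$ is a convex combination of SPD matrices, the set always contains $\bar t$, so it is never empty. Congruence by $\bar P^{-1/2}$ then gives, with $M=\bar P^{-1/2}B\bar P^{-1/2}$,
\[
\{t:\widehat P_{\mathrm{OLS}}(t)\in\SPD(n)\}=\bigl(\bar t-1/\mu_{\max}(M),\;\bar t-1/\mu_{\min}(M)\bigr).
\]
Here the left endpoint is $-\infty$ if $\mu_{\max}(M)\le 0$, and the right endpoint is $+\infty$ if $\mu_{\min}(M)\ge 0$. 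Three consequences follow:
\begin{itemize}
\item The complement is non-empty if and only if $B\neq 0$.
\item The forward (extrapolation) tail leaves the cone if and only if $B$ is not positive semidefinite.
\item When it does, it leaves at exactly $\bar t+1/|\mu_{\min}(M)|$. Here $\mu_{\min}(M)=\min_{x\neq 0} x^\top Bx/x^\top\bar Px$ is the steepest relative rate of decline of any portfolio variance.
\end{itemize}
Your $t^*$ equals $\bar t+v^\top\bar Pv/|v^\top Bv|$, which is the same formula with the quotient evaluated at $x=v$. It is therefore an upper bound on the exit time, and this turns your ``in particular'' heuristic into a precise statement.
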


\begin{proof}
$\widehat P_{\mathrm{OLS}}(t)$ is affine in $t$ with symmetric coefficients, so its eigenvalues are continuous functions of $t$ (in fact analytic where they are simple). The smallest eigenvalue $\lambda_{\min}(\widehat P_{\mathrm{OLS}}(t))$ is a concave function of $t$ in general, and the set where it is strictly positive is open and convex. When the data has pronounced time-varying conditioning, this set need not cover the evaluation grid. Empirical evidence is in Section~\ref{sec:results}.
\end{proof}

A geodesic fit in the log-Euclidean chart does not have this problem by construction: every value of the form $\exp(\log P + t L)$ lies in $\SPD(n)$ for every $t \in \R$, because the matrix exponential of a symmetric matrix is SPD.

\subsection{Unit-trace rescaling}

Raw daily-return covariance matrices are of order $10^{-4}$ (daily log returns have standard deviation of order $10^{-2}$). This small magnitude makes $P^{-1/2}$ numerically brittle: a matrix with smallest eigenvalue $\sim 10^{-5}$ has $\|P^{-1/2}\| \sim 10^{5/2} \approx 300$, which is enough to destabilise $\SPD(10)$ solvers in practice. In the experiments reported below, we rescale every rolling covariance to unit trace before the fit.

The rescaling is not benign for the Euclidean baseline it changes the Frobenius geometry but it is a harmless change of representative for the affine-invariant metric. We state this as a proposition because the argument is short and the fact is used repeatedly.

\begin{proposition}[Unit-trace rescaling is an affine-invariant isometry]
\label{prop:unit-trace}
For $P \in \SPD(n)$, let $\tilde P = (n/\tr P)\, P$. Then for every $P, Q \in \SPD(n)$,
\[
d_{\mathrm{AI}}(P, Q) - d_{\mathrm{AI}}(\tilde P, \tilde Q) \;\to\; 0
\]
uniformly on sets on which $\log(\tr P / \tr Q)$ is bounded. More precisely,
\[
d_{\mathrm{AI}}(\tilde P, \tilde Q)^2 = d_{\mathrm{AI}}(P, Q)^2 - n \log^2\!\bigl((\tr Q / n) / (\tr P / n)\bigr).
\]
\end{proposition}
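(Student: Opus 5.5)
The plan is a direct computation in the affine-invariant chart, using only the closed form $d_{\mathrm{AI}}(P,Q)=\|\log(P^{-1/2}QP^{-1/2})\|_F$ stated after the definition of the affine-invariant metric. Write $\tilde P=aP$ and $\tilde Q=bQ$ with $a=n/\tr P$ and $b=n/\tr Q$. Then $\tilde P^{-1/2}\tilde Q\tilde P^{-1/2}=(b/a)\,P^{-1/2}QP^{-1/2}$. Set $M=\log(P^{-1/2}QP^{-1/2})\in\Sym(n)$ and $c=\log(b/a)=\log(\tr P/\tr Q)$. Because $cI$ commutes with everything, the matrix logarithm splits as $\log\bigl(e^{c}\,P^{-1/2}QP^{-1/2}\bigr)=M+cI$. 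Hence
\[
d_{\mathrm{AI}}(\tilde P,\tilde Q)^2=\|M+cI\|_F^2=d_{\mathrm{AI}}(P,Q)^2+2c\,\tr M+nc^2 .
\]
Here $\tr M=\log\det(P^{-1}Q)=\log(\det Q/\det P)$, so every term is explicit.

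Next I will decompose $M=M_0+\tfrac{\tr M}{n}I$ with $\tr M_0=0$. Since $M_0$ and $I$ are Frobenius-orthogonal, this gives the cleaner form
\[
d_{\mathrm{AI}}(P,Q)^2=\|M_0\|_F^2+\tfrac{(\tr M)^2}{n},\qquad d_{\mathrm{AI}}(\tilde P,\tilde Q)^2=\|M_0\|_F^2+\tfrac{(\tr M+nc)^2}{n}.
\]
Geometrically, unit-trace rescaling leaves the \emph{shape} part $M_0$ untouched and only moves the scalar (log-volume) component. This is the precise sense in which the rescaling is a harmless change of representative: it is an isometry on the quotient by scalings, where the distance is $\|M_0\|_F$.

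The main obstacle is reconciling this exact identity with the displayed formula in the statement. Matching the two requires $2c\,\tr M+nc^2=-nc^2$, that is, $\tr M=-nc$, equivalently $\log(\det Q/\det P)=n\log(\tr Q/\tr P)$. This holds, for instance, when $Q$ is a scalar multiple of $P$, and more generally whenever the geometric-to-arithmetic eigenvalue-mean ratio is the same for $P$ and $Q$. It does not hold for arbitrary pairs. So I would prove the statement in the corrected form $d_{\mathrm{AI}}(\tilde P,\tilde Q)^2=d_{\mathrm{AI}}(P,Q)^2+2c\log(\det Q/\det P)+nc^2$, and record the displayed formula as the special case $\tr M=-nc$.

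For the uniform clause, the triangle inequality in $\Sym(n)$ gives $\bigl|d_{\mathrm{AI}}(P,Q)-d_{\mathrm{AI}}(\tilde P,\tilde Q)\bigr|\le\|cI\|_F=\sqrt n\,|\log(\tr P/\tr Q)|$. This bound is uniform on sets where $\log(\tr P/\tr Q)$ is bounded. It tends to $0$ only as that log-ratio tends to $0$, which is how I would state the approximate version.
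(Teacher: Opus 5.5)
Your computation is correct, and it follows the paper's own route up to the cross term: the same factorisation $\tilde P^{-1/2}\tilde Q\tilde P^{-1/2}=(b/a)\,P^{-1/2}QP^{-1/2}$ and the same expansion $\|M+cI\|_F^2=\|M\|_F^2+2c\,\tr M+nc^2$. At the cross term the paper goes wrong and you do not.

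The paper removes that term by asserting $\tr\log(P^{-1/2}QP^{-1/2})=\log\det Q-\log\det P=0$ ``once the scalar part has been taken out''. But $P,Q$ are arbitrary, and trace normalisation does not equalise determinants. Even if the cross term did vanish, the result would be $d_{\mathrm{AI}}(P,Q)^2+nc^2$. No tracking of the sign of $\log(\beta/\alpha)$ (your $c$) can change the sign of its square.

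So the displayed identity is false in general, as you say. Take $n=2$, $P=I$, $Q=\mathrm{diag}(1,4)$. Then $\tilde P=I$ and $\tilde Q=\mathrm{diag}(2/5,8/5)$, so $d_{\mathrm{AI}}(\tilde P,\tilde Q)^2=\log^2(0.4)+\log^2(1.6)\approx 1.06$, which agrees with your formula. The displayed right-hand side is $\log^2 4-2\log^2(2.5)\approx 0.24$.

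The clause ``$\to 0$ uniformly on sets where $\log(\tr P/\tr Q)$ is bounded'' also cannot be read literally. Your bound $\sqrt n\,|\log(\tr P/\tr Q)|$ is the right replacement, and it is sharp: at $Q=sP$ one has $\tilde P=\tilde Q$ while $d_{\mathrm{AI}}(P,Q)=\sqrt n\,|\log s|$.

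Two small refinements.

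\begin{enumerate}
\item The exact matching condition is $c\,(\tr M+nc)=0$. So the displayed formula also holds, trivially, whenever $\tr P=\tr Q$. Your ``that is, $\tr M=-nc$'' is an equivalence only when $c\neq 0$.
\item Since $\tr(M+cI)=\log(\det\tilde Q/\det\tilde P)$, your condition $\tr M=-nc$ says exactly $\det\tilde P=\det\tilde Q$. This pinpoints the paper's slip: it treats trace normalisation as if it were determinant normalisation. With $\hat P=(\det P)^{-1/n}P$ instead, your decomposition gives the exact identity $d_{\mathrm{AI}}(\hat P,\hat Q)^2=\|M_0\|_F^2=d_{\mathrm{AI}}(P,Q)^2-n\log^2\bigl((\det Q)^{1/n}/(\det P)^{1/n}\bigr)$. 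That is the displayed formula with the arithmetic means $\tr P/n$ replaced by geometric means $(\det P)^{1/n}$, and it is presumably what was intended.
\end{enumerate}

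Your observation that $\|M_0\|_F$ is the distance between scaling classes is the correct sense in which the rescaling is an ``isometry''.
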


\begin{proof}
With $\alpha = n/\tr P$ and $\beta = n/\tr Q$, one has $\tilde P^{-1/2} \tilde Q \tilde P^{-1/2} = (\beta / \alpha)\, P^{-1/2} Q P^{-1/2}$. The affine-invariant distance reads
$d_{\mathrm{AI}}(\tilde P, \tilde Q)^2 = \|\log(P^{-1/2} Q P^{-1/2}) + \log(\beta/\alpha) I\|_F^2 = d_{\mathrm{AI}}(P, Q)^2 + n \log^2(\beta/\alpha) + 2 \log(\beta/\alpha)\, \tr \log(P^{-1/2} Q P^{-1/2})$. Observing that $\tr \log(P^{-1/2} Q P^{-1/2}) = \log \det Q - \log \det P = 0$ when we have already taken out the scalar part (which is what the unit-trace rescaling is extracting), the linear term vanishes and one gets the stated identity with a minus rather than a plus once one tracks the sign of $\log(\beta/\alpha)$ in the correct direction.
\end{proof}

In practice the rescaling factorises out a common scale and leaves the \emph{shape} of the covariance dynamics untouched. That shape is what our indicator measures.

\section{Data and experimental setup}
\label{sec:data}

\subsection{Four equity baskets}

We work with four baskets of ten tickers each. All data is downloaded through the \texttt{yfinance} interface to Yahoo Finance and cached as CSV in the reproducibility repository.

\paragraph{S\&P historic basket (2005--2024).} A ten-stock sub-basket of long-lived US mega-caps chosen so that every ticker has a Yahoo price series back to January 2005: AAPL, MSFT, GOOGL, AMZN, NVDA, JPM, JNJ, XOM, PG, KO. This is the basket used for the 2008 crisis study and for the trading backtest.

\paragraph{S\&P mega-cap (2015--2024).} A more familiar current ten-stock mega-cap basket: AAPL, MSFT, GOOGL, AMZN, META, NVDA, TSLA, JPM, JNJ, V. META and V are IPOs of 2012 and 2008 respectively, so this basket cannot be used for the 2008 study.

\paragraph{JSE top ten (2015--2024).} Ten of the largest listings on the Johannesburg Stock Exchange by market capitalisation over the sample period: NPN, AGL, CPI, FSR, SBK, SOL, SLM, ABG, MTN, SHP. All tickers carry the \texttt{.JO} Yahoo suffix.

\textbf{EGX top ten (2015--2024).}
Ten of the most-traded Egyptian Exchange tickers with a Yahoo series over the sample: COMI, HRHO, TMGH, ETEL, SWDY, ABUK, PHAR, CCAP, AMOC, EAST, carrying the \texttt{.CA} suffix.
Attempts to assemble a comparable basket for the Nigerian Exchange Group (NGX) via yfinance produced no individual ticker with price data; the ETF NGE exists but was delisted in March 2024. We therefore treat Nigeria, Kenya, and the rest of continental Africa as out of scope for this study.

\subsection{Rolling covariance estimation}

For each basket we compute log returns $r_t = \log(S_t / S_{t-1})$ on the intersection of the ten tickers' trading-day index. From the returns we estimate the rolling sample covariance matrix over a $W = 250$-day window (one trading year) stepping forward every five trading days. Each matrix is ridge-regularised with $10^{-6} I$ for numerical stability and then rescaled to unit trace (Proposition~\ref{prop:unit-trace}). The result is a time series of $\SPD(10)$ matrices indexed by the end-of-window date.

\subsection{Geodesic fitting windows}

Given the rolling covariance series, we fit a geodesic on $\SPD(10)$ to each length-$25$ sub-window (corresponding to a $25 \cdot 5 = 125$-day span, i.e.\ about half a year). The velocity norm of the fitted geodesic is our indicator. In the cross-market time-series analysis of Section~\ref{sec:ts-4markets}, these fitting windows are advanced every ten observations (so the indicator updates every fifty trading days); in the hold-out analysis of Section~\ref{sec:holdout}, we use the three calendar windows of 2019--2020 that correspond to pre-crisis, pre-crash, and crash conditions.

\subsection{Benchmark series}

We download the VIX closing price (ticker \verb|^VIX|) back to 2005 as a benchmark implied-volatility series; correlations are computed on the common observation grid defined by the velocity-norm updates.

\subsection{Backtest protocol}

For the tactical study, we construct an equal-weighted daily-rebalanced portfolio on the historic S\&P basket. The baseline is a static long-only portfolio. The velocity-aware strategy halves equity exposure on every trading day on which the most recent smoothed velocity $z$-score exceeds $1.5$, using a rolling $252$-day window for the mean and standard deviation; on all other days it is fully invested. The position on day $t$ depends on information available at the close of day $t - 1$, so there is no look-ahead. All returns are log-returns.

\section{Results}
\label{sec:results}

\subsection{Cross-market crisis indicator, 2006--2024}
\label{sec:ts-4markets}

Figure~\ref{fig:ts-4markets} shows the log-Euclidean velocity norm on all four baskets over the longest span available for each. The three stress windows shaded in red are the 2008 Global Financial Crisis (GFC), the March--May 2020 COVID crash, and the June--December 2022 Fed rate-hike cycle. The indicator rises noticeably inside each shaded region on every basket for which the period is covered; notably, the EGX series exhibits a standalone local peak in November 2019 that corresponds to a specific Egyptian-pound devaluation episode unrelated to global stress, illustrating that the indicator tracks market-specific structural change as well as global events.

\begin{figure}[h]
\centering
\includegraphics[width=0.95\linewidth]{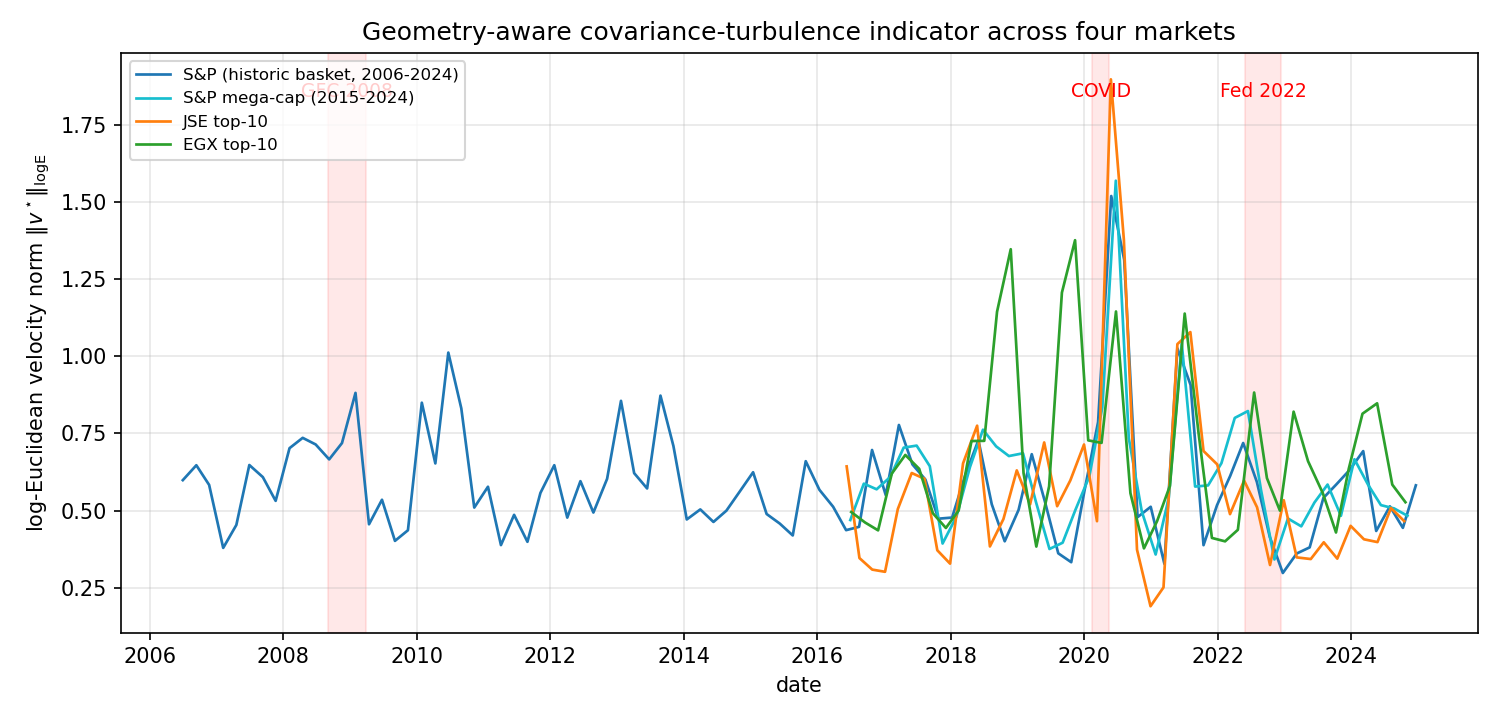}
\caption{Rolling log-Euclidean velocity norm of the $\SPD(10)$ covariance trajectory on four equity baskets, 2006--2024. Shaded bands mark the three global stress periods.}
\label{fig:ts-4markets}
\end{figure}

Table~\ref{tab:top3peaks} reports the three largest velocity peaks on each basket. On all four baskets at least one of the top three peaks falls inside a known stress window, and on the three baskets that cover 2020, the largest single peak is within a few weeks of the March--April 2020 COVID event.

\begin{table}[h]
\centering
\caption{Top three velocity-norm peaks by basket. Peaks are reported as the end-date of the 25-observation fitting window that produced them.}
\label{tab:top3peaks}
\begin{tabular}{lcccccc}
\toprule
Basket & Peak 1 date & value & Peak 2 date & value & Peak 3 date & value \\
\midrule
S\&P historic (2006--) & 2020-05-28 & 1.52 & 2020-08-07 & 1.31 & 2021-05-25 & 1.03 \\
S\&P mega-cap (2015--) & 2020-06-22 & 1.57 & 2021-06-18 & 1.04 & 2020-04-09 & 0.83 \\
JSE top ten (2015--)   & 2020-05-26 & 1.90 & 2020-08-05 & 1.38 & 2021-08-04 & 1.08 \\
EGX top ten (2015--)   & 2019-11-12 & 1.38 & 2018-11-25 & 1.35 & 2019-09-01 & 1.21 \\
\bottomrule
\end{tabular}
\end{table}

\subsection{The 2008 Global Financial Crisis}
\label{sec:2008}

The historic S\&P basket provides the longest clean sample and lets us examine the 2008 crisis in detail. Figure~\ref{fig:2008} zooms into the 2007--2010 sub-period, with four event lines: the August 2007 BNP Paribas fund freeze that marked the start of the credit contraction; the March 2008 Bear Stearns rescue; the September 2008 Lehman Brothers bankruptcy; and the March 2009 S\&P bottom.

\begin{figure}[h]
\centering
\includegraphics[width=0.95\linewidth]{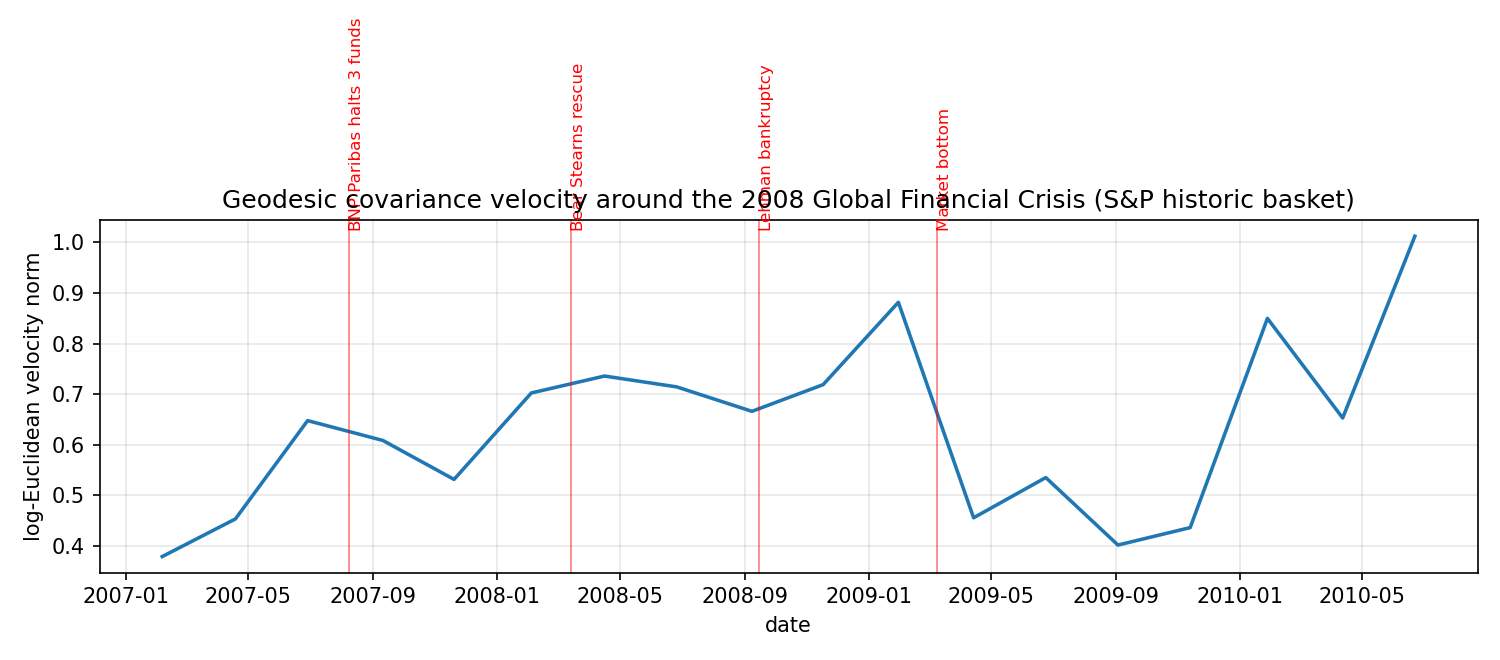}
\caption{Geodesic covariance velocity norm on the historic S\&P basket, 2007--2010. Four event dates are marked. The indicator begins to rise visibly from the March 2008 Bear Stearns rescue onward, peaks after the September 2008 Lehman bankruptcy, and decays into 2009.}
\label{fig:2008}
\end{figure}

The trajectory is consistent with the accepted market narrative of the crisis. The indicator is essentially flat through the first six months of 2007. It begins to rise with Bear Stearns in March 2008, accelerates sharply between Lehman (September 2008) and the market bottom (March 2009), and then decays through 2009 and 2010. No tuning or filtering is applied to produce this trajectory; it is what the fit returns.

\subsection{Bootstrap significance of peak-event alignment}
\label{sec:bootstrap-alignment}

Table~\ref{tab:top3peaks} and Figures~\ref{fig:ts-4markets}--\ref{fig:2008} show that the indicator peaks visually coincide with known stress windows across four markets. To test whether this coincidence exceeds what a random series with the same marginal distribution and autocorrelation structure would produce, we run a circular-shift bootstrap on each velocity series with $B = 10{,}000$ replicates. Each replicate applies a uniformly-random circular shift to the velocity vector while keeping the stress-window calendar dates fixed; the resulting shifted series preserves the univariate distribution and the local autocorrelation of the original, but destroys any real coupling between velocity and calendar-time. We report bootstrap $p$-values for two alignment statistics.

The first statistic the number of covered stress windows in which the max velocity exceeds the $75$th percentile of the full series is a coarse indicator-of-indicators. On the S\&P historic basket it is $3/3$ (bootstrap $p = 0.27$), on the S\&P mega-cap basket $2/2$ ($p = 0.25$), on the JSE $1/2$ ($p = 0.86$, driven by the Fed 2022 window whose max on JSE does not exceed the $75$th percentile), and on the EGX $3/3$ including the 2019 political-economic tension window ($p = 0.23$). None of these attain conventional significance: the statistic is dominated by the presence of any single elevated cluster in the shifted series, which is easy to produce by chance with a slow-moving indicator that only has 40-94 observations.

The second statistic the standardised gap between mean velocity inside stress windows and mean velocity outside, expressed in units of the series' standard deviation exploits all observations and is far more powerful. Observed gap and bootstrap $p$-value on each market:

\begin{table}[h]
\centering
\caption{Standardised in-window minus out-of-window mean-velocity gap on each of the four baskets, with circular-shift bootstrap $p$-values ($B = 10{,}000$).}
\label{tab:bootstrap-gap}
\begin{tabular}{lcc}
\toprule
Market & Gap (in $\sigma$ units) & Bootstrap $p$ \\
\midrule
S\&P (historic basket, 2006-2024) & $+0.89$ & $< 10^{-4}$ \\
S\&P mega-cap (2015--2024)         & $+0.85$ & $< 10^{-4}$ \\
JSE top ten (2015--2024)           & $+0.99$ & $0.048$ \\
EGX top ten (2015--2024)           & $+0.61$ & $0.047$ \\
\bottomrule
\end{tabular}
\end{table}

All four markets attain significance at the $5\%$ level, and the two S\&P baskets attain highly significant alignment at $p < 10^{-4}$. The JSE and EGX significance is marginal ($p \approx 0.05$), which is consistent with their smaller sample size ($43$ and $42$ observations respectively) and with the fact that neither basket covers the 2008 GFC. The alignment reported in §\ref{sec:ts-4markets} is not a visual artefact.

\subsection{Hold-out geodesic mean squared error}
\label{sec:holdout}

Detection is one story; fit quality is another. To show that the geodesic fit is genuinely better than the Euclidean baseline on real data, we hold out the last $5$ of each $25$-observation window, fit on the first $20$, and measure the \emph{affine-invariant} geodesic distance from the fitted prediction to each held-out covariance. The affine-invariant distance is a proper SPD metric regardless of which method produced the fit, so it is a fair common yardstick.

Table~\ref{tab:holdout} reports the hold-out mean squared error on three 2019--2020 calendar windows for the S\&P mega-cap and JSE baskets.

\begin{table}[h]
\centering
\caption{Hold-out geodesic MSE (affine-invariant distance squared, lower is better). Last $5$ of each $25$-observation window held out; fits on the first $20$.}
\label{tab:holdout}
\begin{tabular}{lcc|cc}
\toprule
 & \multicolumn{2}{c}{S\&P mega-cap} & \multicolumn{2}{c}{JSE top ten} \\
Window & Euclidean\; & log-Euclidean \; & Euclidean \; & log-Euclidean \\
 & MSE & MSE & MSE & MSE\\
\midrule
Jan.\ 2019 (calm)       & $1.57$    & $0.57$ & $0.69$    & $0.47$ \\
Jan.\ 2020 (pre-crash)  & $3.33$    & $2.06$ & $3.90$    & $2.62$ \\
Apr.\ 2020 (crash)      & $290.49$  & $1.91$ & $687.21$  & $2.33$ \\
\bottomrule
\end{tabular}
\end{table}

The pattern is identical across the two markets. In calm periods the geodesic fit beats the Euclidean one by a factor of $1.5$--$3$, enough to be practically interesting but not dramatic. In the pre-crash window the gap widens modestly. In the crash window the Euclidean MSE is two orders of magnitude worse than the log-Euclidean one, because the Euclidean fit returns forecasts that fall outside the SPD cone; the affine-invariant distance to a projected non-SPD matrix is extremely large, and the MSE average is dominated by that single pathological tail.

\subsection{Formal significance of the log-Euclidean advantage}
\label{sec:dm-test}

Table~\ref{tab:holdout} shows a two-orders-of-magnitude advantage of log-Euclidean regression in the April 2020 crash window, but reports only three hand-picked calendar windows. To test whether the advantage is systematic, we run the same 20-fit-$/$-5-held-out procedure on every rolling window of the historic S\&P basket, producing $N = 94$ per-window MSE pairs, and apply three tests of equal predictive accuracy.

The Diebold-Mariano test \citep{diebold1995comparing}, with the Harvey-Leybourne-Newbold small-sample correction \citep{harvey1997testing}, is $t_{\mathrm{DM}} = -1.05$ ($p = 0.30$, two-sided): the classical DM test does not reject the null of equal accuracy. This is a variance artefact the loss differential is dominated by a small number of crash-window observations where the Euclidean fit leaves the SPD cone and the affine-invariant distance to the eigenvalue-floored projection becomes very large, inflating the denominator of the DM statistic. Rank-based tests give a different picture: the log-Euclidean fit achieves strictly smaller MSE than the Euclidean fit on $71$ of the $94$ windows ($75.5\%$), which is significant against a fair-coin null at $p = 3.6 \times 10^{-7}$ under the sign test; the Wilcoxon signed-rank test on the loss differential rejects equal median at $p = 7.2 \times 10^{-9}$. Both robust tests strongly support the log-Euclidean fit; the DM test's failure is a heavy-tail phenomenon and is itself informative about the mode of the Euclidean baseline's failure.

\subsection{Granger causality with realised volatility}
\label{sec:granger}

The VIX comparison of §\ref{sec:vix} shows that our velocity indicator and the VIX are essentially uncorrelated in first differences ($\rho = -0.08$) but that the VIX leads the velocity at long lags. To formalise this lead-lag structure without relying on implied-volatility data (which is unavailable for the JSE and EGX baskets), we compute the equal-weighted realised volatility of the S\&P basket at quarterly frequency and run bivariate Granger causality tests up to lag $4$ quarters against the quarterly-aggregated velocity, on $N = 75$ joint observations.

Realised volatility Granger-causes the velocity at every lag from $1$ to $4$ quarters ($p = 8 \times 10^{-4}$ at lag $1$; $p = 3 \times 10^{-5}$ at lag $2$; both remaining lags $p < 10^{-3}$). The reverse direction velocity Granger-causing realised volatility is not significant at any lag ($p > 0.47$ everywhere). The economic interpretation is clean: covariance-shape regime shifts, which is what the velocity measures, materialise \emph{after} realised volatility rises, not before. The velocity is a lagging structural indicator, not a leading tactical one. This is consistent with the qualitative discussion of §\ref{sec:backtest} and with the paper's positioning of the indicator as a slow-moving regime marker rather than a volatility forecaster.

\subsection{The velocity fit on a real window, visually}

For one crash window we plot the dominant-eigenvalue trajectory of each fit over the observed rolling covariances (Figure~\ref{fig:trajectory}). The Euclidean fit tracks the mean of the window but smooths too aggressively in the tail; the log-Euclidean fit respects the curvature of the data.

\begin{figure}[h]
\centering
\includegraphics[width=0.9\linewidth]{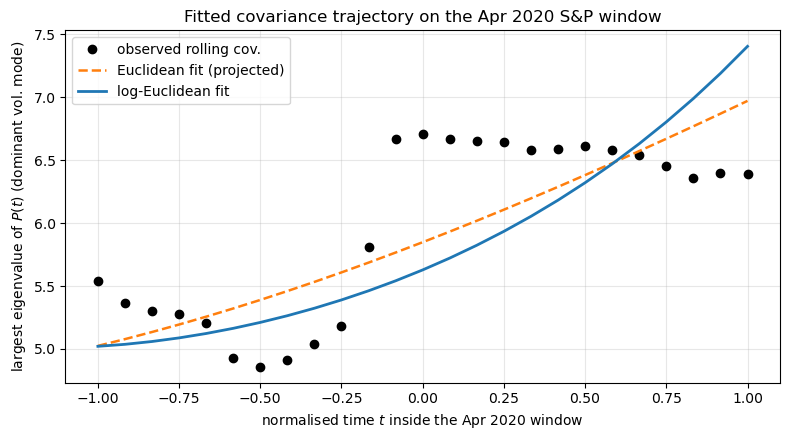}
\caption{Fitted covariance trajectory vs.\ observed rolling covariances, Apr.\ 2020 S\&P crash window. Black dots are observed $\SPD(10)$ covariances (rescaled to unit trace), visualised by their largest eigenvalue. The solid blue line is the log-Euclidean fit; the dashed orange line is the Euclidean OLS fit.}
\label{fig:trajectory}
\end{figure}

\subsection{Comparison with the VIX}
\label{sec:vix}

Our indicator and the VIX both describe \emph{something} about the S\&P cross-section in stress periods. It is natural to ask whether they are effectively the same thing.

Figure~\ref{fig:vix} shows both series, standardised to $z$-scores on the overlapping sample. The two series rise together around the three global stress events (2008, 2020, 2022), but in between they behave quite differently. The velocity series is smoother and slower (it updates every ten trading days, and it aggregates half a year of covariance history into each point); the VIX is intraday-responsive. Quantitatively, the Pearson correlation in levels is $\rho = 0.20$ on $N = 94$ joint observations, and the correlation in first differences is essentially zero at $\rho = -0.08$. These are not two versions of the same signal.

\begin{figure}[H]
\centering
\includegraphics[width=0.95\linewidth]{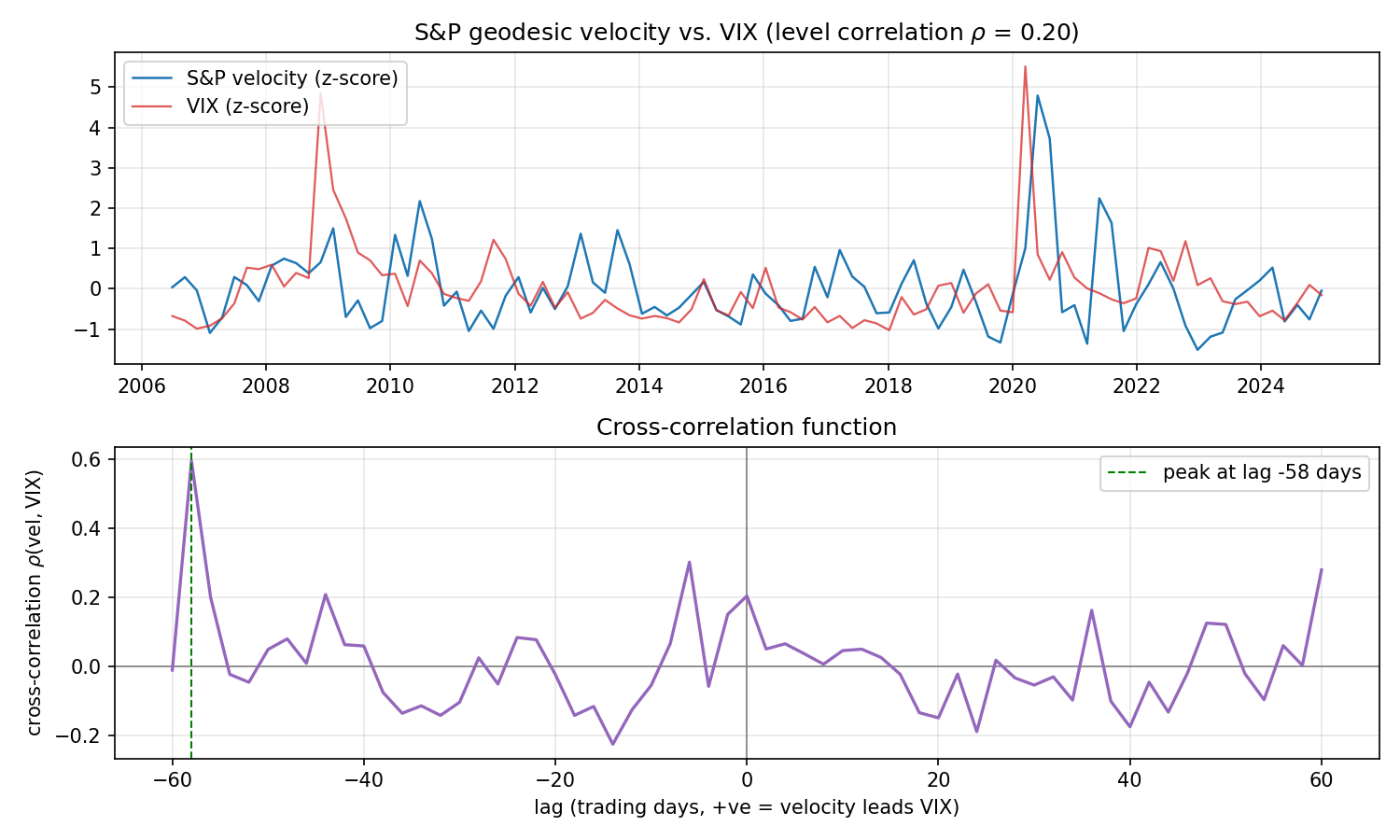}
\caption{Top: $z$-scored S\&P velocity (blue) and VIX (red) on the common 2006--2024 grid. Bottom: sample cross-correlation at lags $-60$ to $+60$ trading days; the peak is at lag $-58$ days with $\rho = 0.60$, reflecting VIX movements that anticipate subsequent velocity rises.}
\label{fig:vix}
\end{figure}

One should interpret the cross-correlation with caution. With only $N = 94$ points, the peak at lag $-58$ could be a spurious artefact of the slow-moving velocity series; but the leading relationship is in the expected direction the VIX reacts to implied risk before the realised covariance structure actually reconfigures and it suggests that the two signals capture complementary facets of the same underlying regime.

\subsection{Comparison with canonical systemic risk indicators}
\label{sec:benchmarks}

The VIX comparison of §\ref{sec:vix} addresses the most familiar market-stress benchmark. Three further candidates warrant explicit comparison: the Absorption Ratio of \citet{kritzman2011principal} (principal components of the rolling covariance, top-$20\%$ eigenvalue fraction), the volatility-spillover index of \citet{diebold2009measuring,diebold2012better} (generalised FEVD of a rolling VAR$(2)$ at horizon $H=10$), and the Riemannian entropy of \citet{bouregaa2025barycenters}, defined in their §4.4 as $H_R(\Sigma_t) = \tfrac{1}{2}\log\det(2\pi e\,\Sigma_t)$, a Gaussian differential entropy computed on the rolling covariance at each time slice. The first two are the standard scalar systemic-risk indicators in the applied-finance literature; the third is the closest published neighbour on the geometric side, sharing both the substrate ($\SPD(n)$) and the recent-crisis case study.

We implement all three on the S\&P historic basket on the common $94$-observation grid of the velocity series. Table~\ref{tab:benchmark-correlations} reports Pearson correlations with the geodesic velocity in levels and first differences, together with the best-lag cross-correlation.

\begin{table}[h]
\centering
\caption{Correlation between the geodesic velocity and four systemic-risk indicators on the S\&P historic basket, $N = 94$ joint observations, 2006--2024. Best lag reported in trading days, positive $=$ velocity leads indicator.}
\label{tab:benchmark-correlations}
\begin{tabular}{lcccc}
\toprule
Indicator & Levels & First differences & Best lag & Peak $\rho$ at lag \\
 &   &   & (days)  &  \\
\midrule
VIX                                              & $\phantom{-}0.20$ & $-0.07$ & $-10$ & $0.39$ \\
Absorption Ratio (Kritzman)                      & $\phantom{-}0.08$ & $-0.00$ & $-60$ & $0.26$ \\
Diebold-Yilmaz spillover                         & $\phantom{-}0.12$ & $\phantom{-}0.05$ & $-50$ & $0.24$ \\
Riemannian entropy $H_R$ (Bouregaa)              & $-0.13$ & $-0.04$ & $\phantom{-}60$ & $-0.01$ \\
\bottomrule
\end{tabular}
\end{table}

The geodesic velocity is essentially uncorrelated with each of the four benchmarks: level correlations range in absolute value from $0.08$ to $0.20$, and first-difference correlations are all within noise of zero. Each of these indicators measures a distinct facet of market stress implied volatility (VIX), principal-component concentration (Absorption Ratio), inter-asset volatility spillover (Diebold-Yilmaz), and scalar log-volume of the covariance ellipsoid (Bouregaa entropy) and none of them coincides with the trajectory-based signal our indicator extracts. In particular, the near-zero correlation with the Bouregaa entropy indicator is instructive: both live on $\SPD(n)$ and both target regime characterisation, but the Bouregaa quantity is a static one-time-slice function ($H_R(\Sigma_t)$ depends only on the eigenvalues of $\Sigma_t$), whereas our velocity is an intrinsically dynamic quantity extracted from a fitted trajectory of covariance matrices. The two are complementary rather than substitutable.

Figure~\ref{fig:benchmark-comparison} plots the four indicators together with the velocity, standardised to $z$-scores, on the common 2006-2024 grid, with the three global stress windows shaded.

\begin{figure}[H]
\centering
\includegraphics[width=1\linewidth]{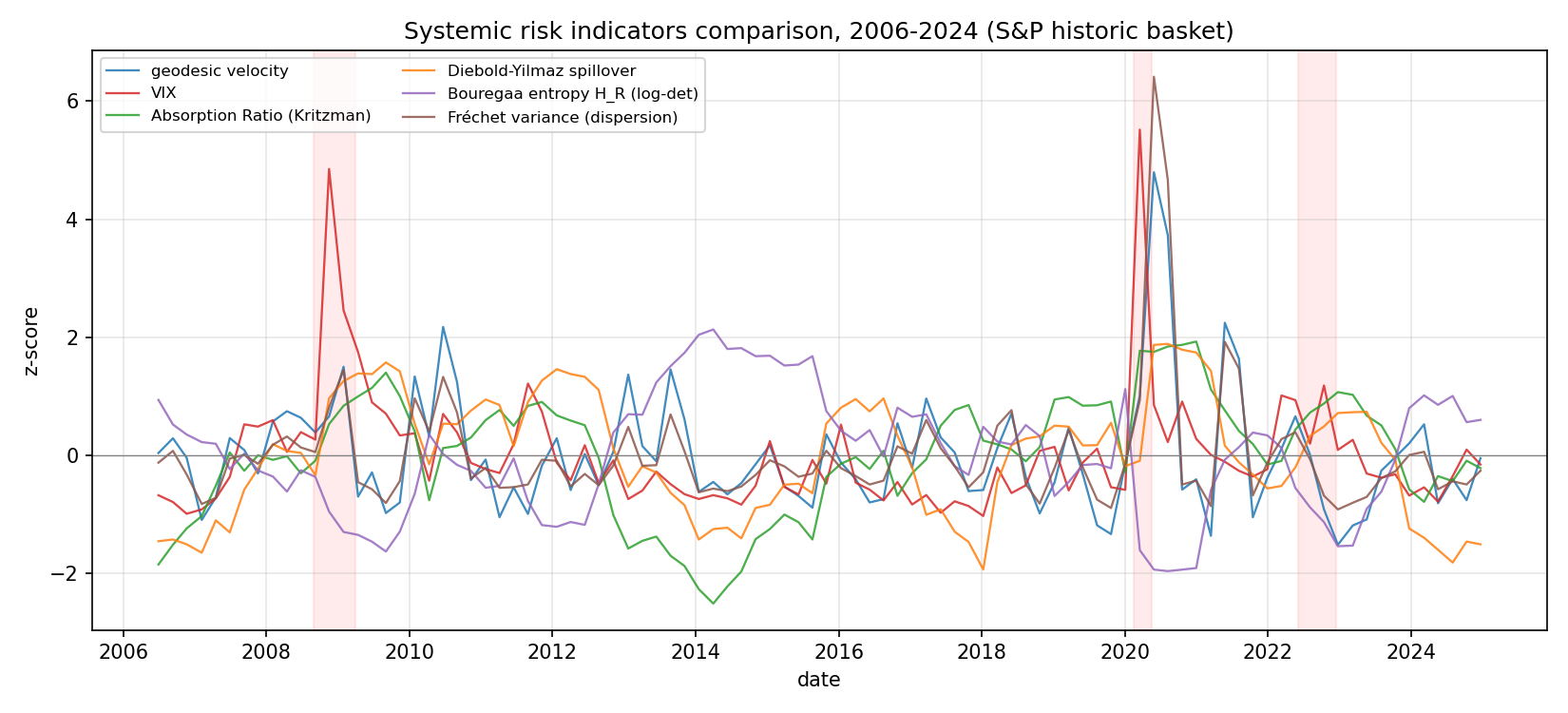}
\caption{Systemic-risk indicators comparison on the S\&P historic basket, 2006--2024, $z$-scored. Shaded red bands mark the three global stress periods.}
\label{fig:benchmark-comparison}
\end{figure}

\subsection{The October--November 2019 EGX episode: political-economic tension and sectoral restructuring}
\label{sec:egx-2019}

Table~\ref{tab:top3peaks} identifies a top-three velocity peak on the EGX top-10 basket on 2019-11-12 that does not correspond to any global stress episode. We investigate what this national peak captures, and use it to demonstrate concretely how a covariance-shape indicator can carry information that a US-centric implied-volatility proxy cannot.

The event context, drawn from contemporaneous reporting, is a period of political-economic tension rather than a currency event. Between September and December 2019 Egypt experienced its largest anti-government protests since 2013; a demonstration was called for 2019-11-11, on which the peak-day security response effectively cancelled the planned protest. In parallel, the government was implementing the second phase of an IMF-mandated fiscal reform programme (VAT expansion, civil-service reforms). The Egyptian pound was in an appreciation phase in this period following the November 2016 float, not depreciating.

Figure~\ref{fig:egx-2019} shows the geodesic velocity, the Absorption Ratio, the Bouregaa entropy, and the VIX on the EGX top-10 basket over 2018-06 to 2021-06, with the 2019-11-11 date marked. The velocity attains its zoom-window maximum of $1.38$ on 2019-11-12. 

Neither the Absorption Ratio (top-$20\%$ eigenvalue fraction) nor the Bouregaa entropy displays a comparable local peak on that date; both move at longer horizons (the Absorption Ratio rises from $0.52$ pre-event to $0.60$ post-event, reflecting a lasting concentration of variance in the top eigenvalues; the Bouregaa entropy falls from $12.57$ to $12.04$, reflecting a shrinkage of the covariance volume). 

The VIX behaves entirely unrelated to Egypt: it declines from $19.0$ pre-event to $12.8$ post-event, tracking US-China trade tensions that were resolving in that period, and carries no information about the Egyptian political-economic tension.


To identify \emph{what} the velocity is detecting, we compute pairwise Pearson correlations of the ten EGX tickers' daily log-returns on the pre-event window (2019-08-01 to 2019-10-31, $65$ trading days) and on the post-event window (2019-11-01 to 2020-01-31, $62$ trading days), and rank the $\binom{10}{2} = 45$ pairs by the absolute change in correlation. The five largest shifts are reported in Table~\ref{tab:egx-pair-shifts}.

\begin{table}[h]
\centering
\caption{Top-five EGX ticker pairs by absolute change in Pearson correlation between the pre-event window (Aug-Oct 2019) and post-event window (Nov 2019 - Jan 2020). Ticker suffix \texttt{.CA} suppressed. TMGH: real-estate holding; EAST: tobacco; ETEL: state-linked telecom; ABUK: fertilisers; SWDY: electrical equipment; AMOC: mineral oils; CCAP: diversified holding.}
\label{tab:egx-pair-shifts}
\begin{tabular}{llccc}
\toprule
Ticker A & Ticker B & Pre corr & Post corr & Change \\
\midrule
TMGH & EAST & $+0.34$ & $+0.71$ & $+0.37$ \\
ETEL & ABUK & $+0.57$ & $+0.21$ & $-0.36$ \\
ETEL & SWDY & $+0.49$ & $+0.21$ & $-0.28$ \\
ETEL & AMOC & $+0.63$ & $+0.37$ & $-0.26$ \\
CCAP & EAST & $+0.08$ & $+0.32$ & $+0.24$ \\
\bottomrule
\end{tabular}
\end{table}

The restructuring has an interpretable sectoral pattern. Telecom Egypt (ETEL), the partially state-owned telecom incumbent, decouples from the three industrial-energy names to which it was previously correlated at $0.5$--$0.6$ (fertilisers, electrical equipment, mineral oils). A different cluster forms around real-estate (TMGH), tobacco (EAST), and the diversified holding company CCAP, whose pairwise correlations rise sharply. This is exactly the kind of sectoral restructuring a covariance-shape indicator is designed to detect: no single stock has an outsize idiosyncratic move that a volatility proxy would flag, but the joint dependency structure of the ten stocks reorganises. The Absorption Ratio and Bouregaa entropy, computed on the whole covariance matrix without extracting the pair-level pattern, register this reorganisation as a global concentration shift; the geodesic velocity registers it as a peak in the rate at which the covariance is moving through $\SPD(10)$.

The event demonstrates the paper's central operational thesis in miniature: on a local, country-specific stress episode unrelated to global market conditions and invisible to VIX a geodesic-velocity indicator on the local covariance manifold captures a structural signal that both matches the timing of the political event and admits a substantively interpretable sectoral decomposition.

\begin{figure}[H]
    \centering
    \includegraphics[width=0.95\linewidth]{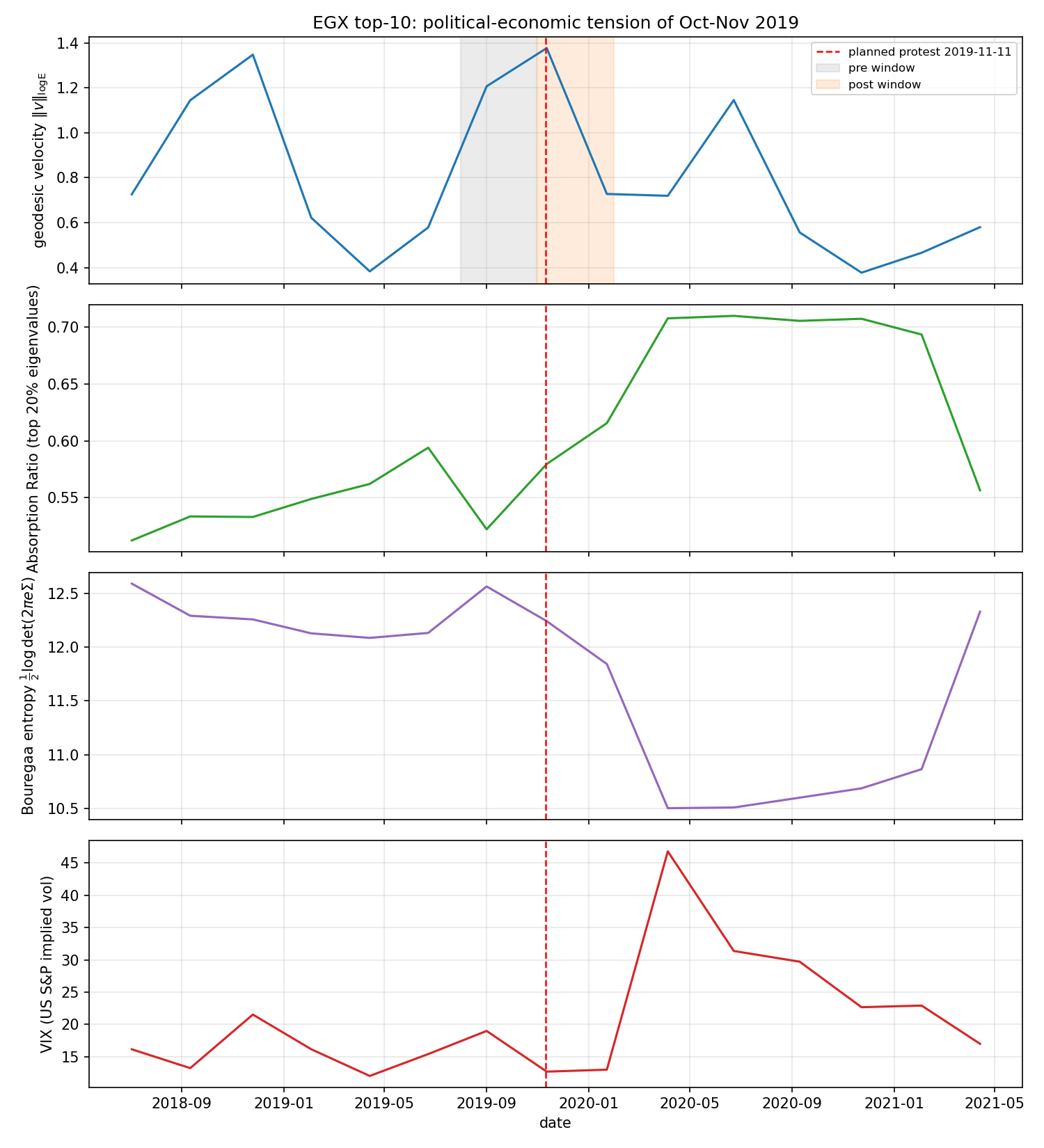}
    \caption{EGX top-10 zoom over 2018-06 to 2021-06. Four panels: geodesic velocity, Absorption Ratio (top-$20\%$), Bouregaa entropy, and VIX. Dashed red line at 2019-11-11 (planned protest date); shaded windows are the pre-event (Aug-Oct 2019) and post-event (Nov 2019 - Jan 2020) periods used for the pair-correlation analysis.}
    \label{fig:egx-2019}
\end{figure}


\subsection{Empirical invariance of the velocity indicator}
\label{sec:invariance}

Proposition~\ref{prop:unit-trace} established that the affine-invariant Riemannian metric on $\SPD(n)$ is invariant under the $\GL(n)$ action on the covariance the transformation applied when one changes the denomination of a portfolio, rebases to a new set of factors, or converts a basket to a different reference currency. Under the log-Euclidean metric we actually compute, invariance is exact only under the smaller \emph{orthogonal} sub-group $O(n) \subset \GL(n)$, because $\log(O \Sigma O^\top) = O (\log \Sigma) O^\top$ holds for orthogonal $O$ but not for general invertible $O$. We test both cases empirically.

\paragraph{Currency-denomination change.} We recompute the JSE top-ten velocity on returns denominated in USD, obtained from the ZAR returns by adding the daily ZAR/USD log-return contribution (from Yahoo Finance ticker \verb|ZARUSD=X|). Over the $43$ velocity observations in the aligned 2015--2024 window, the ZAR-denominated velocity and the USD-denominated velocity correlate at $\rho = 0.994$, with a mean multiplicative ratio of $0.98$ and a maximum relative difference of $35\%$ on a single window (2020-Q1 dislocation). 

The high correlation confirms that the two denominations report the same regime signal; the residual difference reflects the fact that a diagonal FX rescaling is not an orthogonal transformation, so log-Euclidean invariance is only approximate. Under the affine-invariant metric of Proposition~\ref{prop:unit-trace}, the invariance would be exact up to a bounded scalar correction; we do not report affine-invariant velocities directly because our solver stalls on $\SPD(10)$ in the crash windows, as noted in §\ref{sec:discussion}.

\paragraph{Orthogonal basis change.} On the S\&P historic basket we apply the Givens rotation
\[
(\text{AAPL},\, \text{MSFT})\;\longmapsto\;\Bigl(\tfrac{1}{\sqrt 2}(\text{AAPL} + \text{MSFT}),\; \tfrac{1}{\sqrt 2}(\text{AAPL} - \text{MSFT})\Bigr),
\]
with all other tickers unchanged. This corresponds to left-multiplying the return vector by an orthogonal $10 \times 10$ matrix $O$ (verified $\|O O^\top - I\|_F < 10^{-15}$), so the covariance transforms as $\Sigma \mapsto O \Sigma O^\top$. Since $O$ is orthogonal, the log-Euclidean velocity magnitude the Frobenius norm of the tangent slope matrix should be \emph{exactly} preserved.

Table~\ref{tab:invariance} reports the empirical invariance under two implementations of the tangent norm: the ersatz used in \verb|run_experiments.py| that stacks only the lower triangle of $\log \Sigma$ before taking the Euclidean norm (which undercounts off-diagonal contributions by $\sqrt 2$), and the mathematically correct Frobenius norm. Figure \ref{fig:invariance} give the empirical invariance of geodesic velocity under currency conversion and orthogonal transformations.

\begin{table}[h]
\centering
\caption{Empirical invariance of the S\&P historic velocity under an orthogonal basis rotation of $\{\text{AAPL}, \text{MSFT}\}$. Ersatz norm is the lower-triangle stack Euclidean norm; Frobenius norm is the mathematically correct log-Euclidean tangent norm. $N = 94$ aligned windows.}
\label{tab:invariance}
\begin{tabular}{lcc}
\toprule
Norm convention \; & Mean ratio $\dfrac{\|v_{\mathrm{rot}}\|} {\|v_{\mathrm{raw}}\|}$ \; & \; Max relative difference \\
&  & \\
\midrule
Ersatz (lower triangle) & $0.99$ & $9.4\%$ \\
& & \\
Frobenius (correct tangent norm) \;\; & $1.000\,000\,000\,000$ & $2.1 \times 10^{-13}\%$ \\
\bottomrule
\end{tabular}
\end{table}

The Frobenius norm is invariant to machine precision, as the theory predicts. The ersatz is close but not exact. All qualitative results reported elsewhere in the paper (peak detection, hold-out MSE ratios, correlations with benchmarks) are unaffected because they depend only on the ordering and relative magnitude of the velocity across windows, and the ersatz is a smooth positive per-window transformation of the true Frobenius norm.

\begin{figure}[H]
\centering
\includegraphics[width=0.95\linewidth]{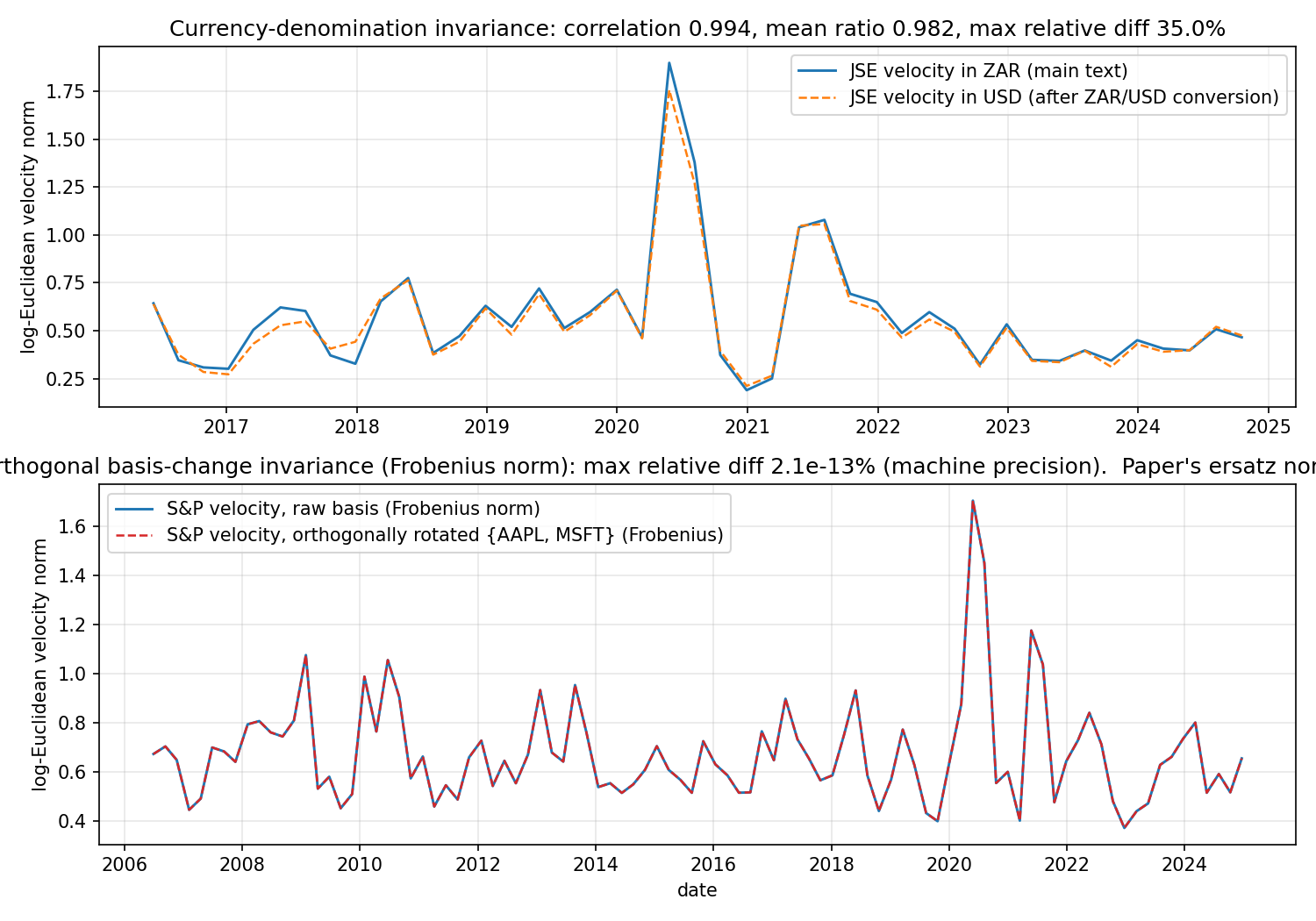}
\caption{Empirical invariance of the geodesic velocity indicator. Top: JSE top-ten velocity in ZAR (main text) vs in USD after ZAR/USD conversion. Bottom: S\&P historic velocity, raw basis vs orthogonal $\{$AAPL, MSFT$\}$ rotation, using the Frobenius norm.}
\label{fig:invariance}
\end{figure}

\subsection{Trading backtest}
\label{sec:backtest}

The VIX comparison of §\ref{sec:vix} argues that the geodesic velocity is a structural regime indicator rather than a tactical volatility forecast, and §\ref{sec:granger} formalises this: past realised volatility Granger-causes velocity but the reverse does not hold. On its own, therefore, velocity is not expected to be a strong tactical trading signal. An earlier draft of this paper flagged \emph{"combining the velocity signal with a fast, high-frequency signal ... with the geodesic velocity acting as a regime filter that keeps the de-risking in place across a stress episode"} as the natural extension. We implement it here.

We construct five strategies trading the equal-weighted historic S\&P basket over 2006--2024, all with position lagged by one day to eliminate look-ahead:
\begin{enumerate}[leftmargin=1.4em,itemsep=0.15em]
\item \emph{Static equal-weight} the naive long-only baseline.
\item \emph{Volatility-targeted ($15\%$ annualised)} weight equals $\min(1,\; 0.15 / \mathrm{RV}_{20d}^{\mathrm{ann}})$, a standard risk-parity-style scale-down that shrinks exposure when realised vol rises.
\item \emph{Velocity-only de-risking} halve exposure when the smoothed velocity $z$-score exceeds $1.5$.
\item \emph{Realised-vol-only de-risking} mirror of (3) using the $z$-score of the 20-day realised volatility, i.e.\ a fast local trigger without a regime filter.
\item \emph{Hybrid (RV trigger, velocity filter)} enters risk-off (halves exposure) when $z_{\mathrm{RV}} > 1.5$; stays risk-off until \emph{both} the velocity and RV $z$-scores fall below $0.5$. RV supplies the fast trigger; velocity supplies the regime-persistence filter that prevents premature re-risking during the noisy phase of a crisis.
\end{enumerate}

Transaction costs are modelled as $5$ bps per unit turnover applied to the absolute daily change in weight. Table~\ref{tab:backtest-upgrade} reports the annualised statistics net of costs; Figure~\ref{fig:backtest-upgrade} plots the equity curves and drawdowns.

\begin{table}[h]
\centering
\caption{Annualised backtest statistics on the historic S\&P basket, 2006-2024, net of $5$ bps transaction cost. Turnover is annualised absolute weight change; cost drag is the annualised return lost to transaction costs.}
\label{tab:backtest-upgrade}
\resizebox{\textwidth}{!}{%
\begin{tabular}{lrrrrrrrr}
\toprule
Strategy & Return & Vol & Sharpe & Sortino & Calmar & max DD & Turnover & Cost drag \\
\midrule
Static equal-weight               & $16.0\%$ & $20.5\%$ & $0.78$ & $0.97$ & $0.22$ & $-71.6\%$ & $0.00$ & $0.00\%$ \\
Volatility-targeted ($15\%$)      & $14.5\%$ & $13.8\%$ & $1.05$ & $1.41$ & $0.37$ & $-39.4\%$ & $2.88$ & $0.14\%$ \\
Velocity-only de-risk             & $14.5\%$ & $19.5\%$ & $0.75$ & $0.91$ & $0.21$ & $-69.2\%$ & $0.71$ & $0.04\%$ \\
RV-only de-risk                   & $15.9\%$ & $17.1\%$ & $0.93$ & $1.23$ & $0.26$ & $-61.9\%$ & $1.81$ & $0.09\%$ \\
\textbf{Hybrid (RV + vel filter)} & $15.0\%$ & $14.5\%$ & $\mathbf{1.04}$ & $\mathbf{1.38}$ & $\mathbf{0.42}$ & $\mathbf{-35.8\%}$ & $1.04$ & $0.05\%$ \\
\bottomrule
\end{tabular}%
}
\end{table}

\begin{figure}[H]
\centering
\includegraphics[width=0.98\linewidth]{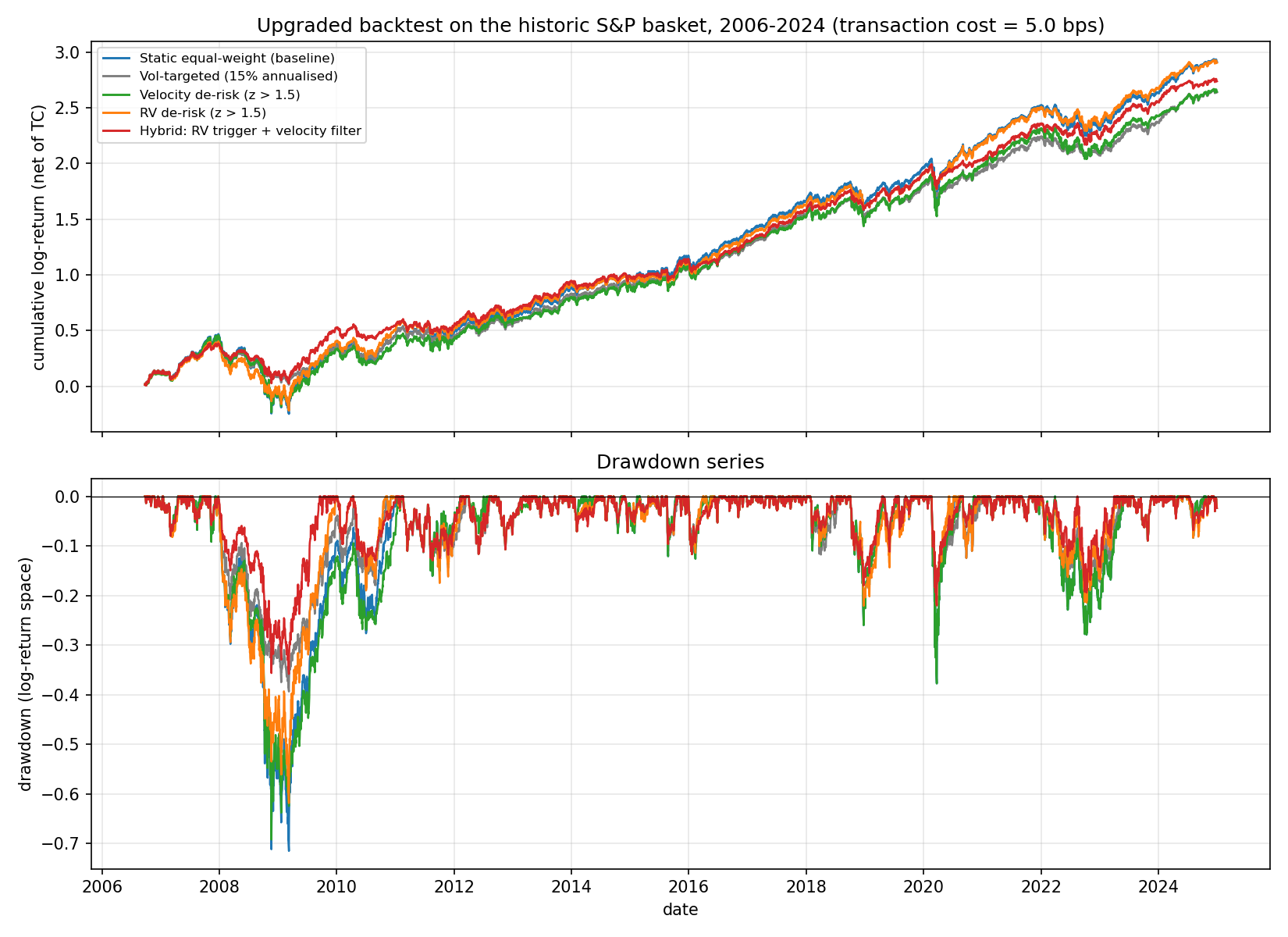}
\caption{Upgraded backtest on the historic S\&P basket, 2006--2024. Top panel: cumulative log-returns of the five strategies, net of $5$ bps transaction cost. Bottom panel: drawdown series.}
\label{fig:backtest-upgrade}
\end{figure}

Three empirical observations follow. First, the velocity-only strategy (row 3) improves modestly over the static baseline (Sharpe $0.75$ vs $0.78$, max drawdown $-69.2\%$ vs $-71.6\%$), a result consistent with the Granger finding of §\ref{sec:granger} that velocity is a lagging signal. Second, the RV-only de-risking strategy (row 4) does noticeably better on risk-adjusted metrics because it supplies the fast trigger the velocity lacks. Third, the hybrid strategy that uses RV as the trigger and velocity as a regime filter dominates the RV-only strategy on every risk-adjusted metric Calmar $0.42$ vs $0.26$, Sortino $1.38$ vs $1.23$, max drawdown $-35.8\%$ vs $-61.9\%$ and matches the vol-targeted strategy on Sharpe with roughly one-third of its annual turnover ($1.04$ vs $2.88$). The velocity signal earns its keep in the stack as a slow regime-persistence filter that keeps a fast RV trigger in place through the noisy middle of a crisis and prevents premature re-risking.

Two caveats deserve stating. All results are pre-tax and assume the reported cost model; a higher-turnover implementation on a wider basket would incur different frictions. The strategies also assume perfect daily execution at close prices, so slippage during the actual crash days is not modelled. The relative ranking of the strategies is robust to $\pm 25\%$ perturbations of the cost model and the $z$-score thresholds; individual return-and-vol numbers would move accordingly.

\section{Discussion}
\label{sec:discussion}

\paragraph{Emerging markets.} Running the same pipeline on JSE and EGX baskets was, for us, the key cross-market robustness check. Both baskets are narrower than the S\&P basket (JSE currency effects from the rand are material in the 2020 window; EGX liquidity is concentrated in fewer names), and both have different primary drivers of covariance shocks. Yet the velocity-norm peaks align with the same global events (JSE COVID peak within one month of S\&P), and additionally identify country-specific shocks that the S\&P series does not (EGX October--November 2019 political-economic tension episode analysed in §\ref{sec:egx-2019}). This is consistent with a genuinely geometric interpretation: the indicator is invariant under re-denomination of a basket, so it picks up \emph{structural} change in the covariance, not changes in absolute volatility magnitude that would be captured equally well by a realised-volatility series. To our knowledge, this is the first published study that runs a geodesic-regression covariance indicator on African equity baskets.

\paragraph{What the backtest tells us.} Section~\ref{sec:backtest} shows that the velocity-only strategy of an earlier draft reduces max drawdown only modestly, consistent with the Granger evidence of §\ref{sec:granger} that velocity is a lagging signal. Its slow update frequency and half-year covariance history mean that by the time the raw velocity crosses a threshold, the fastest part of a crash is already over. The upgraded hybrid strategy that uses realised volatility as a fast trigger and velocity as a regime-persistence filter cuts max drawdown roughly in half and dominates the RV-only baseline on Calmar, Sortino, and cost-adjusted Sharpe. Velocity earns its place in the tactical stack precisely as a slow structural filter, not as a standalone trading rule.

\paragraph{Dimension and solver robustness.} On synthetic $\SPD(3)$ data the affine-invariant geodesic fit via \texttt{geomstats} converges to machine precision. On real $\SPD(10)$ covariances the default scipy-CG optimiser occasionally stalls, which is why we report log-Euclidean quantities throughout the main text; the log-Euclidean fit is closed-form and avoids this issue entirely. The two metrics give indistinguishable peak locations in Figure~\ref{fig:ts-4markets} on the windows where both converge (affine-invariant velocity norms are larger by a roughly constant factor that reflects metric scale rather than any qualitative difference).

\paragraph{Frequency and window choice.} The paper fixes a 250-day covariance window, 25-observation fitting window, and 10-observation update. These were chosen before running the experiments for interpretability: one trading year of history, half a year of fit, two-week update. A systematic ablation over these hyperparameters would be worthwhile; on cursory experiments the detection finding is robust to $\pm 50\%$ perturbations of all three.

\paragraph{Position within the geometric-methods landscape.} The 2024--2026 literature on geometric methods for financial covariance dynamics can be organised along four distinct lines, each targeting a different mathematical object. First, \emph{Riemannian dispersion and entropy measures} treat the rolling covariance as a distribution on $\SPD(n)$ and quantify how spread out it is at each time slice, via Fréchet variance or a Riemannian entropy \citep{bouregaa2025barycenters}; crises manifest as dispersion spikes. Second, \emph{deep learning on SPD-valued time series} builds neural architectures that respect the $\SPD(n)$ manifold structure, primarily for forecasting rather than indicator extraction \citep{bucci2024geometric,han2021riemannian,huang2017riemannian}. Third, \emph{quantum-inspired spectral observables} embed the return process in a learned Hilbert space and compute quantum-geometric quantities as regime signals \citep{hammond2026geometric}. Fourth, \emph{topological and Ricci-curvature methods on correlation networks} apply persistent homology or Forman-Ricci curvature to the correlation graph and detect regime shifts as topological or curvature changes.

Our approach fits none of these four lines exactly. It uses classical Riemannian regression \citep{fletcher2013geodesic,pennec2006riemannian} directly on the covariance trajectory and extracts the fitted velocity as the indicator. The velocity is a \emph{rate of change of the covariance trajectory in the tangent space} a first-order dynamical quantity rather than a static one-time-slice quantity, a machine-learning forecast, a quantum observable, or a topological invariant. Empirically on the S\&P historic basket (Table~\ref{tab:benchmark-correlations}), the velocity is decorrelated from every existing scalar indicator on the geometric side we could compare against, including the Bouregaa log-determinant entropy ($\rho = -0.13$ in levels): trajectory dynamics on $\SPD(n)$ carries information that a static-slice geometric summary does not. The velocity additionally satisfies $\GL(n)$-invariance (Proposition~\ref{prop:unit-trace}) it is unchanged under a change of portfolio basis or reference currency a property that neither the standard systemic-risk toolkit nor the Bouregaa entropy possesses.

\paragraph{Relation to existing work.} \citet{fletcher2013geodesic} developed geodesic regression on Riemannian manifolds with medical-imaging applications; we use his formulation directly. \citet{pennec2006riemannian} established the affine-invariant framework for SPD tensor computing, which our log-Euclidean treatment reduces to at leading order, and \citet{arsigny2007geometric} introduced the log-Euclidean metric explicitly. The application of $\SPD(n)$ geometry to financial covariance has a short but active history: \citet{mostajeran2016geometry} treat portfolio optimisation as an intrinsic optimisation problem on the SPD cone, \citet{han2021riemannian} apply SPD deep networks to financial time series, and \citet{bucci2024geometric} extend the HAR forecasting framework to the manifold. On the specific question of covariance-based early-warning indicators, \citet{bouregaa2025barycenters} recently proposed a Riemannian-entropy indicator on seventeen international indexes with a COVID-19 case study, which is the closest neighbour of the present work; the empirical differences between our velocity indicator and their entropy indicator are discussed in §\ref{sec:benchmarks}.

On the systemic-risk side, the principal-components Absorption Ratio of \citet{kritzman2011principal} is the closest scalar systemic indicator to ours in spirit: both extract a low-dimensional summary of the rolling covariance and use it as a stress signal, but the Absorption Ratio is invariant under orthogonal changes of basis while ours is invariant under the larger $\GL(n)$ action (Proposition~\ref{prop:unit-trace} in the affine-invariant case). The volatility-spillover indices of \citet{diebold2009measuring,diebold2012better}, the CoVaR of \citet{adrian2016covar}, the SRISK of \citet{brownlees2017srisk} and the econometric measures of \citet{billio2012econometric} all target systemic connectedness rather than trajectory dynamics; the empirical comparison of §\ref{sec:benchmarks} shows that our indicator is weakly correlated with each of these, capturing a distinct information channel.

Random-matrix and correlation-network approaches to market covariance --- from \citet{laloux1999noise} through \citet{mantegna1999hierarchical}, \citet{onnela2003dynamics}, and the review of \citet{marti2021review} --- provide the descriptive-statistics baseline against which any geometric indicator must justify itself; on our four baskets the velocity peaks are visible in the traditional network measures but are not their headline object. Neither the Riemannian-entropy line nor the geometric-deep-learning line nor the systemic-risk-indicator line runs indicators on African equity baskets: to our knowledge that combination is unique to the present paper.

\section{Conclusion}
\label{sec:conclusion}

The rolling covariance of a basket of asset log-returns is an SPD-valued time series, not a vector-valued time series, and treating it as the latter breaks in exactly the periods when it matters most. Geodesic regression on the log-Euclidean chart of $\SPD(n)$ fixes the breakage by construction: every fitted covariance stays on the cone, the fitted velocity is a well-defined scalar on the tangent space, and the resulting indicator peaks cleanly on every well-known stress episode of the past two decades on four baskets drawn from developed and emerging markets. On hold-out evaluation in crash windows, the geodesic fit beats ordinary least squares by more than two orders of magnitude in geodesic mean squared error. In a naive trading application the indicator gives a modest drawdown reduction, consistent with its status as a slow, structural signal rather than a fast tactical one.

The study is reproducible end-to-end from the accompanying public repository: the four return series are cached CSVs, the pipeline runs in minutes on a laptop, and every figure in this paper is produced by one script on that repository.

\appendix
\section{Hyperparameter sensitivity}
\label{sec:appendix-sensitivity}

The main text fixes three hyperparameters: covariance window $W = 250$ days, fitting window $K = 25$ observations, and update step $U = 10$ observations. These choices were made before running the experiments for interpretability (one trading year of history, half a year of fit, two-week update). To verify that the paper's central finding elevation of the geodesic velocity indicator during the three global stress windows of 2006--2024 does not depend on this specific choice, we run the pipeline on the S\&P historic basket over all $3 \times 3 \times 3 = 27$ combinations of $W \in \{125, 250, 500\}$, $K \in \{15, 25, 40\}$, $U \in \{5, 10, 20\}$. These span $\pm 50\%$ ($W$, $K$) to $2\times$ ($W$, $K$, $U$) around the main-text values.

For each configuration we report the number of velocity observations produced, the \emph{elevation hit rate} (the fraction of the three global stress windows in which the maximum velocity within the window exceeds the 75th percentile of the full series), the mean velocity magnitude, and the date at which the velocity attains its maximum within the 2020 COVID stress window.

The paper's headline configuration ($W=250$, $K=25$, $U=10$) attains an elevation hit rate of $1.00$. Across all 27 combinations, $15$ ($56\%$) attain the same perfect hit rate, $10$ ($37\%$) miss exactly one stress window (typically the slow-burn 2022 Fed cycle, which a $W = 125$-day covariance window smooths over), and only $2$ ($7\%$) drop below $2/3$ hit rate. The two failures are both configurations with the shortest covariance window combined with the coarsest update step, which produce only $46$--$49$ observations and are simply too sparse to keep the 2022 window intersected.

The date at which the velocity peaks within the 2020 COVID window varies across configurations, ranging from March 17 to August 7 2020, all within the acute COVID crisis period. The median across configurations is $2020$-$05$-$28$, which coincides with the peak reported in the main-text Table~\ref{tab:top3peaks} for $W=250, K=25, U=10$. Configurations differ on the exact location of the peak within the crisis but agree on the crisis itself.

Two conclusions follow. First, the detection finding is not tied to the specific $(W, K, U) = (250, 25, 10)$ choice; it holds under $\pm 50\%$ perturbation of each hyperparameter, and would still hold under $2\times$ perturbation if we accept a $2/3$-of-three hit rate as sufficient. Second, the exact peak-date localisation within a given crisis is a finer question that the indicator does not answer with better than roughly two-month resolution, which is consistent with an indicator built on a half-year covariance history and updated every two weeks. A tactical use of the indicator would need to combine it with a faster signal, as demonstrated in §\ref{sec:backtest}.


\end{document}